\numberwithin{equation}{section}
\theoremstyle{plain}
\newtheorem{theorem}{Theorem}[section]
\newtheorem{lemma}[theorem]{Lemma}
\newtheorem{corollary}[theorem]{Corollary}
\theoremstyle{definition}
\newtheorem{definition}[theorem]{Definition}
\newtheorem{case[theorem]}{Case}
\theoremstyle{remark}
\newtheorem{remark}[theorem]{Remark}
\numberwithin{equation}{section}
\begin{document}

\title{Long paths in the distance graph over large subsets of vector spaces over finite fields}

\author{M. Bennett\footnote{University of Rochester}, J. Chapman\footnote{Lyons College}, D. Covert\footnote{University of Missouri - Saint Louis (Corresponding Author)}, D. Hart\footnote{Rockhurst University, NSF Grant \#1242660}, A. Iosevich\footnote{University of Rochester, NSF Grant DMS10-45404 }, and J. Pakianathan\footnotemark[1]}

\maketitle

\begin{abstract} Let $E \subset {\Bbb F}_q^d$, the $d$-dimensional vector space over a finite field with $q$ elements. Construct a graph, called the distance graph of $E$, by letting the vertices be the elements of $E$ and connect a pair of vertices corresponding to vectors $x,y \in E$ by an edge if $||x-y||={(x_1-y_1)}^2+\dots+{(x_d-y_d)}^2=1$. We shall prove that if the size of $E$ is sufficiently large, then the distance graph of $E$ contains long non-overlapping paths and vertices of high degree. \end{abstract} 

\section{Introduction}

\vskip.125in

The classical Euclidean distance graph can be described as follows. Let the vertices of the graph be the points of the Euclidean plane. Connect two vertices by an edge if the Euclidean distance between the corresponding vectors is equal to $1$. A very interesting open question is the exact value of the chromatic number of this graph, the minimal number of colors required so that no two points of the same color are a distance $1$ apart. It is known that the answer is at least four and at most seven. 

In this paper, we continue the investigation of the corresponding distance graph in ${\Bbb F}_q^d$, the $d$-dimensional vector space over the finite field with $q$ elements. Once again, the vertices of the graph are the points of ${\Bbb F}_q^d$ and two vertices $x,y$ are connected by an edge if $||x-y||=1$, where 
$$ ||x||=x_1^2+x_2^2+\dots+x_d^2.$$ 

For some previous results on the properties of the distance graph in ${\Bbb F}_q^d$ see, for example, \cite{MMST}, \cite{V11} , \cite{CHISU} and \cite{CEHIK10}. In this paper we consider a much more complicated case when instead of taking all points in ${\Bbb F}_q^d$ as the vertices of the distance graph, we merely consider points in a subset of ${\Bbb F}_q^d$ of a sufficiently large size. To see that this formulation is meaningful, recall that the 4th listed author and Misha Rudnev proved in \cite{IR07} that if $E \subset {\Bbb F}_q^d$, $d \ge 2$, and $t \neq 0$, then 
$$ |\{(x,y) \in E \times E: ||x-y||=t \}|=\frac{{|E|}^2}{q}+R(t),$$ where 
$$ |R(t)| \leq 2q^{\frac{d-1}{2}}|E|.$$ 

Here and throughout, $|S|$ denotes the number of elements in a (finite) set $S$. In particular, if $t=1$ and $|E|>4q^{\frac{d+1}{2}}$, then 
$$ |\{(x,y) \in E \times E: ||x-y||=1 \}| \ge \frac{{|E|}^2}{2q}.$$ or, in other words, the number of edges in the distance graph of $E$ is at least 
$\frac{{|E|}^2}{2q}$. 

\subsection{Main results} We know that there are many edges and an interesting question is whether the distance graph of $E \subset {\Bbb F}_q^d$ must contain a long path. It turns out that the answer is affirmative. More precisely, we have the following result. 

\vskip.125in 

\begin{theorem} \label{mainfinitefield} Let $E \subset {\mathbb F}_q^d$, where $d \ge 2$ and $|E| > \frac{2k}{\ln 2} q^{\frac{d+1}{2}}$. Suppose that $t_i \not=0$, $1 \leq i \leq k$, and let $\vec{t}=(t_1, \dots, t_k)$. Define 
\[ C_k(\vec{t})=|\{(x^1, \dots, x^{k+1}) \in E \times \dots \times E: ||x^i-x^{i+1}||=
t_i, \ 1 \leq i \leq k \}|.\]
Then  \[ C_k(\vec{t})=\frac{{|E|}^{k+1}}{q^k}+{\mathcal D}_k(\vec{t}), \] where 
\[ |{\mathcal D}_k(\vec{t})| \leq \frac{2k}{\ln 2} q^{\frac{d+1}{2}} \frac{{|E|}^k}{q^k}.\] 
In particular, since $|E|> \frac{2k}{\ln 2} q^{\frac{d+1}{2}}$, $C_k(\vec{t})>0$.
\end{theorem}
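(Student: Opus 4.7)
The plan is to recast the count as a matrix bilinear form and apply an operator-norm strengthening of the Iosevich-Rudnev estimate. Let $M_t$ denote the $|E|\times|E|$ matrix with entries $M_t(x,y) = 1$ if $\|x-y\| = t$ and $0$ otherwise (the adjacency matrix of the distance-$t$ graph on $E$), and let $J = \mathbf{1}\mathbf{1}^{T}$ be the all-ones matrix on $E$. Then
\[ C_k(\vec{t}) = \mathbf{1}^{T} M_{t_1} M_{t_2} \cdots M_{t_k} \mathbf{1}. \]

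The first step is to observe that the Fourier-analytic proof behind the Iosevich-Rudnev bound quoted in the excerpt, run with weights, yields the bilinear estimate
\[ \Bigl|\,f^{T} M_t g - \tfrac{1}{q}\bigl(\textstyle\sum_x f(x)\bigr)\bigl(\textstyle\sum_y g(y)\bigr)\,\Bigr| \le 2 q^{(d-1)/2}\|f\|_2\|g\|_2 \quad (f,g\colon E \to \mathbb{R}), \]
equivalently $\|M_t - \tfrac{1}{q}J\|_{\mathrm{op}} \le 2q^{(d-1)/2}$ on $\ell^{2}(E)$. Writing $M_{t_i} = \tfrac{1}{q}J + \mathcal{E}_i$ with $\|\mathcal{E}_i\|_{\mathrm{op}} \le 2q^{(d-1)/2}$, I expand the matrix product into $2^{k}$ terms. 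The ``all-$\tfrac{1}{q}J$'' contribution equals $\mathbf{1}^{T}(\tfrac{1}{q}J)^{k}\mathbf{1} = |E|^{k+1}/q^{k}$, the claimed main term.

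Each of the remaining $2^k - 1$ products contains some $\ell \ge 1$ error factors $\mathcal{E}_i$ and $k - \ell$ factors $\tfrac{1}{q}J$. Using $\|J\|_{\mathrm{op}} = |E|$, submultiplicativity, and $|\mathbf{1}^{T} P \mathbf{1}| \le |E|\|P\|_{\mathrm{op}}$, each such term is bounded in absolute value by $|E|(|E|/q)^{k-\ell}(2q^{(d-1)/2})^{\ell}$, and there are $\binom{k}{\ell}$ of them. Summing over $\ell$ yields
\[ |\mathcal{D}_k(\vec{t})| \le \frac{|E|^{k+1}}{q^{k}}\bigl((1+\epsilon)^{k}-1\bigr), \qquad \text{where } \epsilon := \frac{2q^{(d+1)/2}}{|E|}. \]

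The final step --- the place where the somewhat mysterious constant $2/\ln 2$ enters --- is to feed in the hypothesis $|E| > \tfrac{2k}{\ln 2} q^{(d+1)/2}$, which is exactly $k\epsilon < \ln 2$. Since $(1+\epsilon)^{k} \le e^{k\epsilon}$ and the function $u \mapsto (e^{u}-1)/u$ is increasing on $(0,\infty)$ with value $1/\ln 2$ at $u = \ln 2$, one has $(1+\epsilon)^{k}-1 \le k\epsilon/\ln 2$, which rearranges to the desired $|\mathcal{D}_k(\vec{t})| \le \tfrac{2k}{\ln 2} q^{(d+1)/2} |E|^{k}/q^{k}$. The main technical point is the operator-norm upgrade of Iosevich-Rudnev; after that it is pleasing to see that the threshold $|E| > \tfrac{2k}{\ln 2}q^{(d+1)/2}$ is precisely the regime in which this binomial-type error sum can be closed off optimally.
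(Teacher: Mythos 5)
Your proof is correct, and it takes a genuinely different route from the paper's. The paper proves the theorem only for $\vec t=(1,\dots,1)$ (with a remark that the general case is similar) via a doubling recursion: setting $f_{k+1}=(f_k*S)E$, it shows $C_{2k+1}=q^{-1}C_k^2+R_{2k+1}$ and $C_{2k}=q^{-1}C_kC_{k-1}+R_{2k}$ (Lemma \ref{structurefinitefield}), then runs one induction (with a completing-the-square step for the even indices) to get the upper bound $C_n\le |E|\bigl(\frac{|E|+2q^{(d+1)/2}}{q}\bigr)^n$, and a second, separate induction for the lower bound. You instead write $M_{t_i}=\frac1qJ+\mathcal E_i$ and expand the product once; this handles arbitrary $\vec t$ with no extra work and yields the two-sided bound $|\mathcal D_k|\le\frac{|E|^{k+1}}{q^k}\bigl((1+\epsilon)^k-1\bigr)$ in a single step. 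Note that this quantity is exactly $|E|X^k-\frac{|E|^{k+1}}{q^k}$ with the paper's $X=\frac{|E|+2q^{(d+1)/2}}{q}$, so your closing numerical step ($k\epsilon<\ln 2$, $(1+\epsilon)^k\le e^{k\epsilon}$, monotonicity of $(e^u-1)/u$) coincides with the paper's binomial computation. Your one new ingredient, the operator-norm upgrade $\|M_t-\frac1qJ\|_{\mathrm{op}}\le 2q^{(d-1)/2}$, is legitimate: the Fourier proof of Theorem \ref{functionalfffalconer} (isolate the $m=0$ mode, then Cauchy--Schwarz and Plancherel on the rest) nowhere uses positivity of $f,g$, and $M_t-\frac1qJ$ is symmetric, so the bilinear bound over all real $f,g$ is indeed the operator-norm bound. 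One shared caveat: both you and the paper silently replace the true main-term coefficient $|S_t|/q^d$ by $1/q$; since $|S_t|=q^{d-1}(1+o(1))$ rather than $q^{d-1}$ exactly, this costs an additional error of order $q^{-(d+1)/2}\|J\|_{\mathrm{op}}=O(q^{(d-1)/2})$ per factor, which strictly speaking should inflate the constant $2$ --- but this imprecision already occurs in the passage from Theorem \ref{functionalfffalconer} to Lemma \ref{structurefinitefield} in the paper, so it is not a defect of your argument relative to theirs.
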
 

\vskip.125in 

\begin{definition} A path of length $k$ in a simple graph $G$ is a sequence of vertices $v^1,v^2, \dots, v^{k+1} \in G$ such that each pair $v^i, v^{i+1}$, $1 \leq i \leq d$ is connected by an edge. We say that a path of length $k$ is non-overlapping if all the $v^j$s in the definition are distinct. \end{definition} 

\vskip.125in

\begin{corollary} \label{longpath} With the notation above, suppose that $|E| \geq \frac{4k}{\ln 2}q^{\frac{d+1}{2}}$. Then the distance graph of $E$ contains a non-overlapping chain of length $k$ of every type.
\end{corollary}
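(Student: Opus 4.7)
The strategy is to count all chains of length $k$ via Theorem~\ref{mainfinitefield} and then subtract those whose vertex sequence contains a repeat, showing the net count remains strictly positive.

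First I would extract a usable lower bound from Theorem~\ref{mainfinitefield}. The hypothesis $|E|\geq \frac{4k}{\ln 2}q^{(d+1)/2}$ exactly doubles the threshold appearing in Theorem~\ref{mainfinitefield}, so the error term absorbs at most half of the main term, yielding
\[
C_k(\vec t)\;\geq\; \frac{|E|^{k+1}}{q^k}\;-\;\frac{2k}{\ln 2}q^{\frac{d+1}{2}}\frac{|E|^{k}}{q^k}\;\geq\;\frac{1}{2}\cdot\frac{|E|^{k+1}}{q^k}.
\]
The factor $4$ (versus the $2$ in Theorem~\ref{mainfinitefield}) is tailored precisely so that half of the main term remains available.

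Next, for each pair $1 \leq i < j \leq k+1$, let $N_{i,j}$ denote the number of chains counted by $C_k(\vec t)$ whose vertex sequence satisfies the coincidence $v^i=v^j$. Since $t_\ell \neq 0$ for every $\ell$, the condition $||v^i-v^{i+1}||=t_i$ forces $v^i\neq v^{i+1}$, so $N_{i,i+1}=0$ automatically. For $j\geq i+2$, I would decompose the offending chain at the common vertex $u:=v^i=v^j$ into three pieces: a chain $v^1,\dots,v^i$ of length $i-1$ ending at $u$, a closed loop $v^i,v^{i+1},\dots,v^j$ of length $j-i$ at $u$, and a chain $v^j,\dots,v^{k+1}$ of length $k-j+1$ starting at $u$. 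The two outer chain factors are controlled by (iterating) Theorem~\ref{mainfinitefield}, while the closed loop requires a quantitative estimate on cyclic configurations in $E$ with prescribed consecutive squared distances, which I would obtain by a Fourier argument parallel to that of Iosevich--Rudnev. The expected net effect is that each $N_{i,j}$ saves essentially one factor of $|E|/q$ over $C_k(\vec t)$.

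Summing the $\binom{k+1}{2}$ pairs, the total number of overlapping chains is at most $\sum_{i<j}N_{i,j}$, and under the hypothesis on $|E|$ this sum is strictly smaller than the lower bound on $C_k(\vec t)$ obtained in the first step; subtracting produces a positive number of non-overlapping chains of type $\vec t$. The principal obstacle is the closed-walk estimate: Theorem~\ref{mainfinitefield} counts linear chains, and one must separately establish the corresponding cyclic count via a Fourier-analytic lemma. This is the only ingredient not immediately furnished by Theorem~\ref{mainfinitefield}, and getting its dependence on $k$ sharp enough to beat the $\binom{k+1}{2}$ prefactor is the delicate point of the argument.
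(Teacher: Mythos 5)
Your proposal has a genuine gap, and it is exactly the one you flag yourself: the inclusion--exclusion over coincident pairs $v^i=v^j$ requires an upper bound on the number of closed walks in $E$ with prescribed consecutive distances, and no such estimate is available from Theorem~\ref{mainfinitefield} or anywhere else in the paper. This is not a routine extension. For a coincidence with $j=i+2$ the ``cycle'' degenerates to a doubled edge and the count is governed by the second moment $\sum_x E(x)h(x)^2$, but for $j-i\geq 3$ you are counting genuine cycles (triangles, quadrilaterals, \dots) with prescribed edge lengths, and controlling these near the threshold $|E|\approx q^{(d+1)/2}$ is a separate and harder Fourier problem (the known simplex/triangle results require larger exponents). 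Since the entire conclusion rests on $\sum_{i<j}N_{i,j}$ being smaller than $\tfrac12 C_k(\vec t)$, and the $N_{i,j}$ are not actually bounded, the argument as written does not close. Your heuristic that each coincidence ``saves a factor of $|E|/q$'' is also only right for the degenerate case $j=i+2$; longer cycles save a full factor of $|E|$, and you would need to track this distinction to beat the $\binom{k+1}{2}$ prefactor.

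The paper avoids cycle counting entirely by a greedy extension argument. Let $g_k(x)$ be the number of non-overlapping $k$-paths beginning at $x$ and $G_k=\sum_x g_k(x)$. Prepending a new vertex $x$ to a non-overlapping path $(v_0,\dots,v_n)$ requires $x\in E$ with $\|x-v_0\|=1$ and $x\notin\{v_1,\dots,v_n\}$, so at most $n$ of the $\sum_y E(y)S(x-y)$ candidates are lost; this gives the recurrence $G_{n+1}\geq -nG_n+\sum_{x,y}g_n(x)E(y)S(x-y)$, a linear-in-$n$ loss per step rather than a sum over all $\binom{k+1}{2}$ coincidence patterns. The bilinear sum is then estimated by Theorem~\ref{functionalfffalconer}, using the pointwise bound $\sum_x g_n(x)^2\leq\sum_x f_n(x)^2=C_{2n}$, which \emph{is} controlled by Theorem~\ref{mainfinitefield}, and induction on $n$ yields $G_k\geq \frac{|E|^{k+1}}{q^k}-\frac{4k}{\ln 2}q^{\frac{d+1}{2}}\frac{|E|^k}{q^k}>0$. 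If you want to salvage your approach you would need to prove the cyclic analogue of Theorem~\ref{mainfinitefield} as a new lemma; the paper's route shows this can be bypassed altogether.
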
 

\vskip.125in 

\begin{remark} In qualitative terms, Corollary \ref{longpath} says that if $|E|$ is, say, $\approx q^{d-\epsilon}$ for some $\epsilon>0$, then the distance graph of $E$ contains a path of length $\approx q^{\frac{d-1}{2}-\epsilon}$. We do not know to what extent this conclusion is sharp. 
\end{remark} 

In addition to studying chains in a distance graph, we study the following closely related configuration.  Given a set $E \subset \mathbb{F}_q^d$, fix a point $x \in E$, and count all of the vectors that are simultaneously some prescribed distance from $x$ (we call such configurations $k$-stars).  We show that if $E$ is of sufficiently large cardinality, then we achieve the statistically correct number of $k$-stars.  More precisely, we prove the following.

\begin{theorem} \label{kstars}
Let $E \subset \mathbb{F}_q^d$, and suppose $t_i \neq 0, 1 \leq i \leq  k$.  For $\vec{t} = (t_1, \dots, t_k)$, define
\[
\nu_k(\vec t) = \left|\left\{(x, x^1, \dots, x^k) \in E^{k+1} : \| x - x^i \| = t_i, x^i = x^j \iff i = j, \quad  1 \leq i \leq k\right\}\right|.
\]

If $|E| > 12q^\frac{d+1}{2}$, then
$\displaystyle \nu_k(\vec t) > 0$ for any $\displaystyle k < \frac{|E|}{12q^\frac{d+1}{2}}$.  

\vskip.125in 

If $|E| > 12q^\frac{d+3}{2}$, then
$\nu_k(\vec t) > 0$ for any $\displaystyle k < \frac{|E|}{12q}$.

\end{theorem}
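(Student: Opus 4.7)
The plan is to find a single center $x \in E$ whose sphere of each radius $t_i$ contains at least $k$ points of $E$; once such an $x$ is in hand, the points $x^1, \ldots, x^k$ can be chosen greedily, each $x^i$ selected from $E \cap S_{t_i}(x) \setminus \{x^1, \ldots, x^{i-1}\}$ (where $S_t(x) = \{y \in \mathbb{F}_q^d : \|x-y\| = t\}$). This is always possible because at most $i-1 < k$ points are forbidden at step $i$ while the sphere count is at least $k$. Even when some of the $t_i$ coincide, the multiplicity of any value is at most $k$, so this uniform threshold is sufficient to enforce the required distinctness.

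For each $i$ set $n_i(x) = |E \cap S_{t_i}(x)|$ and $B_i = \{x \in E : n_i(x) < k\}$. The key input is the bipartite Iosevich--Rudnev estimate, applied to the pair $(B_i, E)$:
\[ \sum_{x \in B_i} n_i(x) = \frac{|B_i||E|}{q} + R_i, \qquad |R_i| \leq 2q^{(d-1)/2}\sqrt{|B_i||E|}. \]
By the definition of $B_i$, the same sum is strictly less than $k|B_i|$. Comparing the two expressions and assuming $|E| > kq$, one extracts
\[ |B_i| \leq \frac{4q^{d+1}|E|}{(|E| - kq)^2}. \]
A union bound then forces $\left|\bigcup_{i=1}^{k} B_i\right| < |E|$ precisely when $4kq^{d+1} < (|E| - kq)^2$, and any $x$ in the complement serves as a good center.

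The two hypotheses of the theorem are two convenient regimes in which this final inequality holds. In the first, $|E| > 12q^{(d+1)/2}$ with $k < |E|/(12q^{(d+1)/2})$ implies $kq \leq |E|/12$, so $(|E| - kq)^2 \geq (11/12)^2 |E|^2$ and the required inequality reduces to $k \lesssim |E|^2/q^{d+1}$, which the hypothesis enforces essentially by construction. In the second, $|E| > 12q^{(d+3)/2}$ with $k < |E|/(12q)$, the extra factor of $q$ in the size of $|E|$ is what pays for the correspondingly larger allowed range of $k$, and the same computation closes at a different scale. The main obstacle I anticipate is carefully tracking the constants so that the union bound closes at exactly the stated thresholds; a secondary point is handling the distinctness issue uniformly, as described above, so that one avoids analyzing each pattern of coincidences among the $t_i$ separately.
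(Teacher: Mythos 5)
Your first claim goes through, but there is a genuine gap in the second regime. Your union bound is taken over $k$ bad sets $B_1,\dots,B_k$, one per index, and you need $4kq^{d+1} < (|E|-kq)^2$. In the second regime take $|E| \approx 12q^{\frac{d+3}{2}}$ and $k \approx \frac{|E|}{12q} \approx q^{\frac{d+1}{2}}$: the left side is $\approx 4q^{\frac{3d+3}{2}}$ while the right side is $\approx 121\,q^{d+3}$, so the required inequality reads $q^{\frac{d-3}{2}} \lesssim 30$, which fails for $d \ge 4$ and $q$ large. This is not a corner case: the hypothesis $|E| > 12q^{\frac{d+3}{2}}$ is compatible with $|E| \le q^d$ only when $d \ge 4$, i.e.\ the second statement is non-vacuous exactly where your union bound does not close. (Your first regime is fine: there $kq \le kq^{\frac{d+1}{2}} < |E|/12$ and $4kq^{d+1} < \tfrac{1}{3}|E|q^{\frac{d+1}{2}} < \tfrac{1}{36}|E|^2 < (\tfrac{11}{12}|E|)^2$.)

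The missing idea is the one the paper leads with: group the $t_i$ by value. The bad set $B_i$ depends only on the value of $t_i$, and there are at most $\min\{k,q-1\}$ distinct values, so the union should run over at most $\min\{k,q-1\}$ sets rather than $k$ of them; moreover for a value $j$ of multiplicity $n_j$ you only need $n_j$ points of $E$ on the sphere of radius $j$ (points at distinct radii are automatically distinct), not $k$ of them. The paper implements this by bounding, via Cauchy--Schwarz and the $2$-chain count $C_2$, the number of points with fewer than $n$ neighbors at a fixed distance by $10q^{\frac{d+1}{2}} + 2qn$, and then summing over distinct values to get the threshold $\min\{k,q\}\cdot 10q^{\frac{d+1}{2}} + 2qk < |E|$; the factor $\min\{k,q\}$ is precisely where the exponent $\frac{d+3}{2}$ comes from. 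Your first-moment estimate on $\sum_{x\in B_i} n_i(x)$ is a perfectly serviceable substitute for the paper's second-moment bound, and with the grouping by value (bad set for value $j$ defined by $n_j(x) < n_j$, union over at most $\min\{k,q-1\}$ values) your computation does close in both regimes; without it, the second statement is not proven.
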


\vskip.25in 

\subsection{Fourier Analysis in $\mathbb{F}_q^d$}
Let $f : \mathbb{F}_q^d \to \mathbb{C}$ and $\chi$ a nontrivial additive character of $\mathbb{F}_q$.  Then,
\[ \widehat{f}(m) = q^{-d} \sum_{x \in \mathbb{F}_q^d} \chi(- m \cdot x) f(x). \]
We have the following Plancherel and inversion formulas:
\[ \sum_{m \in \mathbb{F}_q^d} |\widehat{f}(m)|^2 = q^{-d} \sum_{x \in \mathbb{F}_q^d} |f(x)|^2 \]
\[ f(x) = \sum_{m \in \mathbb{F}_q^d} \widehat{f}(m) \chi(m \cdot x). \]

The proofs are straightforward. See, for example, \cite{IR07} and the references contained therein. 

\vskip.125in 

\section{Proof of Theorem \ref{mainfinitefield}} 

\vskip.125in 

We shall need the following functional version of the arithmetic analog of Falconer's result proved in \cite{CHIKR10}. 
\begin{theorem} \label{functionalfffalconer} Let $f,g: {\mathbb F}_q^d \to {\mathbb R}^{+}$. Let $S_t=\{x \in {\mathbb F}_q^d: ||x||=t \}$, where $||x||=x_1^2+\dots+x_d^2$ and $t \not=0$. Then 
\[ \sum_{x,y \in {\mathbb F}_q^d} f(x)g(y)S_t(x-y)=\frac{|S_t|}{q^d} \cdot {||f||}_1 \cdot {||g||}_1+D(f,g),\] where 
\begin{equation} \label{remainderest} |D(f,g)| \leq 2q^{\frac{d-1}{2}} {||f||}_2 {||g||}_2. \end{equation} 
\end{theorem}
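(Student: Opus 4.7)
The plan is to unfold $S_t(x-y)$ via Fourier inversion, isolate the zero-frequency contribution as the main term, and bound the remainder using Plancherel together with the standard decay estimate on the sphere's Fourier transform.

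First I would apply the inversion formula $S_t(x-y) = \sum_{m \in \mathbb{F}_q^d} \widehat{S_t}(m)\, \chi(m\cdot(x-y))$ and substitute into the left hand side. Factoring $\chi(m\cdot(x-y)) = \chi(m\cdot x)\chi(-m\cdot y)$ and using the definition of $\widehat{\cdot}$, the double sum collapses to
\[ \sum_{x,y}f(x)g(y)S_t(x-y) \;=\; q^{2d} \sum_{m \in \mathbb{F}_q^d} \widehat{S_t}(m)\,\widehat{f}(-m)\,\widehat{g}(m). \]
The $m=0$ term evaluates to $q^{2d}\cdot \frac{|S_t|}{q^d}\cdot \frac{\|f\|_1}{q^d}\cdot \frac{\|g\|_1}{q^d} = \frac{|S_t|}{q^d}\|f\|_1\|g\|_1$ (here nonnegativity of $f,g$ lets us replace $\widehat{f}(0),\widehat{g}(0)$ by $q^{-d}\|f\|_1, q^{-d}\|g\|_1$), which is exactly the claimed main term. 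The remainder $D(f,g)$ is therefore $q^{2d}\sum_{m\neq 0}\widehat{S_t}(m)\,\widehat{f}(-m)\,\widehat{g}(m)$.

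To estimate $D(f,g)$, I would invoke the sphere Fourier bound $|\widehat{S_t}(m)| \le 2q^{-(d+1)/2}$ for all $m \neq 0$; this is the Gauss-sum based estimate underlying the Iosevich--Rudnev theorem cited in \cite{IR07}, and is precisely the input that produced the quoted error $|R(t)|\le 2q^{(d-1)/2}|E|$ in the introduction. Pulling this estimate out and applying Cauchy--Schwarz,
\[ |D(f,g)| \;\le\; 2q^{2d-(d+1)/2}\sum_{m\neq 0}|\widehat{f}(-m)|\,|\widehat{g}(m)| \;\le\; 2q^{2d-(d+1)/2}\,\|\widehat{f}\|_2\,\|\widehat{g}\|_2. \]
Plancherel converts this to the physical side via $\|\widehat{f}\|_2 = q^{-d/2}\|f\|_2$ and similarly for $g$, yielding $|D(f,g)|\le 2q^{(d-1)/2}\|f\|_2\|g\|_2$, which is \eqref{remainderest}.

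The only nontrivial ingredient is the Fourier decay of $S_t$, which is a classical Gauss-sum computation and is already cited from \cite{IR07}; everything else is bookkeeping between the definitions of $\widehat{\cdot}$, $\|\cdot\|_1$, $\|\cdot\|_2$ and the Plancherel identity stated in Section 1.2. Thus I expect no genuine obstacle beyond careful tracking of factors of $q$ and remembering to use the nonnegativity hypothesis on $f,g$ when identifying the $m=0$ contribution.
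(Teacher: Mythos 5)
Your proposal is correct and follows essentially the same route as the paper: expand $S_t(x-y)$ by Fourier inversion, identify the $m=0$ term as the main term $\frac{|S_t|}{q^d}\|f\|_1\|g\|_1$, and bound the $m\neq 0$ sum by combining the sphere decay estimate $|\widehat{S}_t(m)|\le 2q^{-\frac{d+1}{2}}$ with Cauchy--Schwarz and Plancherel. The only cosmetic difference is that you write the off-diagonal sum with $\widehat{f}(-m)\widehat{g}(m)$ where the paper uses $\widehat{f}(m)\overline{\widehat{g}}(m)$; these agree for real-valued $f,g$, and your power-of-$q$ bookkeeping checks out.
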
 

To prove Theorem \ref{functionalfffalconer}, we write 
\[ \sum_{x,y \in {\mathbb F}_q^d} f(x)g(y)S_t(x-y)\]
\begin{equation} \label{falmost}=q^{2d} \sum_{m \not=(0, \dots, 0)} \widehat{f}(m) \overline{\widehat{g}}(m) \widehat{S}_t(m)
+ \frac{|S_t|}{q^d} \cdot {||f||}_1 \cdot {||g||}_1.\end{equation}

We need the following basic facts about the discrete sphere. See, for example, \cite{IR07}. 
\begin{lemma} \label{sphere}
Let $S_t = \{x \in \mathbb{F}_q^d : \| x \| = t\}$.  Identify $S_t$ with its characteristic function.  For $t \neq 0$,
\begin{equation} \label{sizeofsphere}
|S_t| = q^{d-1}(1 + o(1)).
\end{equation}
If $t \neq 0$ and $m \neq (0, \dots , 0)$, then
\begin{equation} \label{decaysphere}
| \widehat{S}_t(m) | \leq 2q^{- \frac{d+1}{2}}.
\end{equation}
\end{lemma}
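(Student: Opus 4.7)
The plan is to derive both estimates from a character-sum expansion of the indicator $S_t$, reducing to Gauss sums and then to Kloosterman or Sali\'e sums.

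First, writing $S_t(x) = q^{-1}\sum_{s \in \mathbb{F}_q}\chi(s(\|x\|-t))$ and summing over $x \in \mathbb{F}_q^d$ separates off an $s=0$ contribution of $q^{d-1}$:
\[
|S_t| = q^{d-1} + q^{-1}\sum_{s \neq 0}\chi(-st)\,G(s)^d,
\]
where $G(s) = \sum_{u \in \mathbb{F}_q}\chi(su^2)$ is a one-variable Gauss sum. In odd characteristic the identity $G(s)=\eta(s)G(1)$, with $\eta$ the quadratic character and $|G(1)|=\sqrt{q}$, rewrites the remainder as $G(1)^d\sum_{s\ne 0}\chi(-st)\eta(s)^d$. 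When $d$ is even the last sum collapses to $-1$; when $d$ is odd it equals $\eta(-t)$ times the Gauss sum of $\eta$, of magnitude $\sqrt{q}$. Either way the error in $|S_t|$ is $O(q^{(d-1)/2})$, which is $o(q^{d-1})$ as $q\to\infty$ whenever $d\ge 2$, establishing \eqref{sizeofsphere}.

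For \eqref{decaysphere} I would perform the analogous expansion of $\widehat{S}_t(m)$. Starting from
\[
\widehat{S}_t(m) = q^{-d-1}\sum_{s \in \mathbb{F}_q}\chi(-st)\sum_{x \in \mathbb{F}_q^d}\chi\!\left(s\|x\|-m\cdot x\right),
\]
the $s=0$ term vanishes for $m \neq 0$. For $s \neq 0$, completing the square in each coordinate rewrites $s\|x\|-m\cdot x = s\|x-m/(2s)\| - \|m\|/(4s)$, so after a translation in $x$ the inner sum factors as $\chi(-\|m\|/(4s))\,G(s)^d$. Using again $G(s)^d = \eta(s)^d G(1)^d$, I am left with
\[
\widehat{S}_t(m) = q^{-d-1}G(1)^d\sum_{s \neq 0}\eta(s)^d\,\chi\!\left(-st - \tfrac{\|m\|}{4s}\right).
\]
For $d$ even, $\eta(s)^d=1$ and this residual sum is a classical Kloosterman sum, to which Weil's bound $2\sqrt{q}$ applies when $\|m\|\ne 0$ (the degenerate case $\|m\|=0$ collapses to $-1$, which is smaller). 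For $d$ odd, $\eta(s)^d=\eta(s)$ and the residual sum is a Sali\'e sum, which admits an essentially explicit evaluation in terms of Gauss sums and again obeys the $2\sqrt{q}$ bound. Since $|G(1)^d|=q^{d/2}$, I conclude $|\widehat{S}_t(m)| \le q^{-d-1}\cdot q^{d/2}\cdot 2\sqrt{q} = 2q^{-(d+1)/2}$.

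The principal obstacle is securing the $2\sqrt{q}$ cancellation in the Kloosterman/Sali\'e sum: the trivial bound of order $q$ would only yield $|\widehat{S}_t(m)| = O(q^{-d/2})$, which is too weak to make the error term in Theorem \ref{functionalfffalconer} beat the main term. This square-root cancellation is the deep input (Weil's theorem for Kloosterman sums, plus the explicit Gauss-sum formula for Sali\'e sums), so in a self-contained proof I would invoke it as a black box. A minor separate issue is characteristic $2$, where the identity $G(s)=\eta(s)G(1)$ fails and one instead counts points on the quadric $\|x\|=t$ directly; the stated bounds persist.
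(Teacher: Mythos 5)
Your argument is correct and is essentially the standard proof from \cite{IR07}, which is exactly what the paper cites in lieu of proving this lemma itself: orthogonality in $s$, completing the square, the Gauss-sum identity $G(s)=\eta(s)G(1)$ with $|G(1)|=\sqrt{q}$, and the Weil bound for the resulting Kloosterman sums (with the explicit Sali\'e evaluation when $d$ is odd). The only inaccuracy is your closing aside that the bounds persist in characteristic $2$: there $\|x\|=(x_1+\dots+x_d)^2$ degenerates, $S_t$ is a union of hyperplanes, and $\widehat{S}_t$ does not decay like $q^{-(d+1)/2}$ --- but the paper, like the literature it builds on, implicitly assumes $q$ odd, so this does not affect the result.
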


Plugging \eqref{decaysphere} into \eqref{falmost} and applying Cauchy-Schwarz, we obtain \eqref{remainderest}. This completes the proof of Theorem \ref{functionalfffalconer}. 

We shall prove Theorem \ref{mainfinitefield} in the case $t=(1, \dots, 1)$ for the sake of ease of exposition, but the reader can easily convince oneself that the argument extends to the general case. Let $C_n = C_n(1,\dots , 1)$.  The basic mechanism of our proof is encapsulated in the following claim. 
\begin{lemma} \label{structurefinitefield} With the notation above, we have 
\[ C_{2k+1}=q^{-1}C_k^2+R_{2k+1}, \]
\[ C_{2k}=q^{-1}C_kC_{k-1}+R_{2k}, \]

where 
\[  |R_{2k+1}| \leq 2q^{\frac{d-1}{2}} C_{2k}\]
and
\[  |R_{2k}| \leq 2q^{\frac{d-1}{2}} \sqrt{C_{2k} C_{2k-2}}.\]

\end{lemma}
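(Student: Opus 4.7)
My plan is to decompose each walk of length $n$ into two sub-walks connected by a single edge, thereby expressing $C_n$ as a convolution sum of exactly the form that Theorem \ref{functionalfffalconer} is designed to handle. For each $\ell \ge 0$, introduce the counting function $f_\ell : \mathbb{F}_q^d \to \mathbb{Z}_{\ge 0}$, supported on $E$, where $f_\ell(y)$ is the number of walks of length $\ell$ in the distance graph of $E$ that terminate at $y$. Summing over the terminal vertex gives $\|f_\ell\|_1 = C_\ell$. The crucial identity is $\|f_\ell\|_2^2 = C_{2\ell}$: a pair of walks of length $\ell$ both ending at a common $y$ is in bijection (by reversing one and concatenating) with walks of length $2\ell$ having $y$ as their $(\ell+1)$-st vertex, and every walk has a unique such central vertex.

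For the odd case, split each walk counted by $C_{2k+1}$ at its middle edge $\{x^{k+1}, x^{k+2}\}$ to obtain
\[ C_{2k+1} = \sum_{y, z \in \mathbb{F}_q^d} f_k(y) f_k(z) S_1(y-z). \]
Applying Theorem \ref{functionalfffalconer} with $f = g = f_k$ produces a main term of $(|S_1|/q^d)\,C_k^2$ and a remainder bounded by $2q^{(d-1)/2} \|f_k\|_2^2 = 2q^{(d-1)/2} C_{2k}$. Lemma \ref{sphere} identifies the leading coefficient $|S_1|/q^d$ with $q^{-1}$ up to a lower-order term, which (using Cauchy--Schwarz in the form $C_k^2 \le |E|\,C_{2k} \le q^d C_{2k}$) is absorbed into $R_{2k+1}$.

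The even case is entirely parallel. Splitting at the $k$-th edge $\{x^k, x^{k+1}\}$ decomposes a walk of length $2k$ into a walk of length $k-1$ terminating at $y = x^k$, a single edge, and a walk of length $k$ starting at $z = x^{k+1}$, so
\[ C_{2k} = \sum_{y, z \in \mathbb{F}_q^d} f_{k-1}(y) f_k(z) S_1(y-z). \]
Theorem \ref{functionalfffalconer} applied with $f = f_{k-1}$, $g = f_k$ yields the main term $q^{-1} C_{k-1} C_k$ and the remainder $2q^{(d-1)/2} \|f_{k-1}\|_2 \|f_k\|_2 = 2q^{(d-1)/2} \sqrt{C_{2k-2}\, C_{2k}}$, as desired. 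The only substantive step is verifying $\|f_\ell\|_2^2 = C_{2\ell}$ through the concatenation bijection described in the first paragraph; once that identity is in place, together with the symmetry of walks under reversal which lets us interchange ``ending at $z$'' with ``starting at $z$'', both claims of the lemma are immediate applications of Theorem \ref{functionalfffalconer}.
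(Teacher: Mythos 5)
Your proof is correct and follows essentially the same route as the paper: the paper defines $f_{k+1}=(f_k*S)E$, notes $\|f_k\|_1=C_k$ and $\|f_k\|_2^2=C_{2k}$, and applies Theorem \ref{functionalfffalconer} to $\sum_{x,y} f_k(x)f_k(y)S(x-y)$ and $\sum_{x,y} f_k(x)f_{k-1}(y)S(x-y)$ exactly as you do. If anything you are slightly more careful than the paper, which silently replaces the main-term coefficient $|S_1|/q^d$ by $q^{-1}$ rather than addressing, as you do, how that discrepancy is to be absorbed into the remainder.
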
 

To prove the lemma, define 
\[ f_1(x)=(E*S)(x)E(x),\] where $S=S_1$, and let 
\[ f_{k+1}(x)=(f_k*S)(x)E(x).\] 

Unraveling the definition of $C_{2k+1}$, we see that it equals 
\[ \sum_{x,y} f_k(x)f_k(y)S(x-y),\] which, by Theorem \ref{functionalfffalconer} is equal to 
\[ q^{-1} {\left(\sum_x f_k(x)\right)}^2+R_{2k+1},\] where 
\[ |R_{2k+1}| \leq 2q^{\frac{d-1}{2}} {||f_k||}_2^2.\]
Similarly, 
\begin{align*}
C_{2k} &= \sum_{x,y} f_k(x) f_{k-1}(y) S(x-y)\\
&= q^{-1} \sum_x f_k(x) \cdot \sum_y f_{k-1}(y)+R_{2k}, 
\end{align*}
where 
\[ |R_{2k}| \leq 2q^{\frac{d-1}{2}} {||f_k||}_2 {||f_{k-1}||}_2.\] 

By a direct calculation, 
\[ {||f_k||}_1=C_k\] and 
\[ {||f_k||}_2^2=C_{2k}.\] 
Putting everything together we recover the conclusions of Lemma \ref{structurefinitefield}.

\begin{lemma}

$$C_{n} \leq |E| \left(\frac{|E|+2q^\frac{d+1}{2}}{q}\right)^n.$$

\end{lemma}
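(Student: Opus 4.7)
The plan is to recast the recursion $f_{k+1}=E\cdot(f_k*S)$ as the iteration of a linear operator
\[
Tg(x):=E(x)(g*S)(x),
\]
bound its $L^2$ operator norm on functions supported in $E$, and pass from the $L^2$ estimate back to $L^1$ via Cauchy--Schwarz. Setting $f_0:=E$, each $f_n=T^n E$ is non-negative and supported on $E$, and $C_n={||f_n||}_1$.

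The key step is the operator bound: for every $g\ge 0$ supported on $E$,
\[
{||Tg||}_2\le M\,{||g||}_2,\qquad \text{where}\qquad M=\frac{|E|+2q^{(d+1)/2}}{q}.
\]
Once this is in hand, iterating $n$ times gives ${||f_n||}_2\le M^n\sqrt{|E|}$, and since $f_n$ is supported on $E$, a final Cauchy--Schwarz yields
\[
C_n={||f_n||}_1\le\sqrt{|E|}\,{||f_n||}_2\le|E|M^n,
\]
which is exactly the desired bound.

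To prove the operator bound I would split $g*S$ into its $m=0$ Fourier contribution and the remainder:
\[
(g*S)(x)=c+r(x),\quad c:=\frac{|S|}{q^d}\,{||g||}_1,\quad r(x):=\sum_{m\ne 0}q^{d}\widehat{g}(m)\widehat{S}(m)\chi(m\cdot x).
\]
The same Plancherel + Cauchy--Schwarz manipulation that proved Theorem \ref{functionalfffalconer}, together with the sphere decay $|\widehat{S}(m)|\le 2q^{-(d+1)/2}$ from Lemma \ref{sphere}, gives ${||r||}_2\le 2q^{(d-1)/2}{||g||}_2$. The triangle inequality in $L^2$ and the bound $|E(x)|\le 1$ then produce
\[
{||Tg||}_2=||E(c+r)||_2\le c\,{||E||}_2+{||r||}_2\le\frac{|S|\sqrt{|E|}}{q^d}\,{||g||}_1+2q^{(d-1)/2}\,{||g||}_2.
\]
Since $g$ is supported on $E$, Cauchy--Schwarz supplies ${||g||}_1\le\sqrt{|E|}\,{||g||}_2$, whence the first term is at most $\frac{|S||E|}{q^d}\,{||g||}_2\le\frac{|E|}{q}\,{||g||}_2$, and the two contributions combine to exactly $M\,{||g||}_2$.

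The only subtlety I anticipate is the inequality $|S|/q^d\le 1/q$, i.e.\ $|S_t|\le q^{d-1}$: Lemma \ref{sphere} only gives $|S_t|=q^{d-1}(1+o(1))$, so strictly speaking $|S|/q^d$ may exceed $1/q$ by $O(q^{-(d+1)/2})$, introducing at most an additional $O(q^{(d-1)/2})$ term in the estimate of ${||Tg||}_2$. I expect this to be absorbed into the leading constant $2$, paralleling the same implicit approximation used throughout the paper (for instance in passing from $\frac{|S|}{q^d}|E|^2$ to $|E|^2/q$ in the Iosevich--Rudnev estimate).
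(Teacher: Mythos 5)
Your proof is correct, but it takes a genuinely different route from the paper. The paper iterates the \emph{doubling} recursion of Lemma \ref{structurefinitefield}, namely $C_{2k+1}=q^{-1}C_k^2+R_{2k+1}$ and $C_{2k}=q^{-1}C_kC_{k-1}+R_{2k}$, and then runs an induction that splits into parity cases; the even case is awkward because $C_{2k}$ appears inside its own error term (via $\sqrt{C_{2k}C_{2k-2}}$), forcing the ``completing the square'' step. You instead prove a single-step $L^2$ operator-norm bound $\|Tg\|_2\le M\|g\|_2$ for $Tg=E\cdot(g*S)$ with $M=\frac{|E|+2q^{(d+1)/2}}{q}$, iterate it, and only convert to $L^1$ at the very end via Cauchy--Schwarz on the support $E$. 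This is cleaner: there are no parity cases, no quadratic recursion, and the identity $\|f_n\|_2^2=C_{2n}$ means your iteration silently controls the even-index chains without ever solving for them. What you lose relative to the paper's method is that the doubling recursion is two-sided --- the paper reuses the same lemma to get the matching \emph{lower} bound $C_n\ge \frac{|E|^{n+1}}{q^n}-\frac{2n}{\ln 2}q^{(d+1)/2}\frac{|E|^n}{q^n}$, which a pure operator-norm upper bound cannot deliver; but for the lemma as stated only the upper bound is needed. The one caveat you raise, $|S_t|/q^d$ versus $1/q$, is real but is exactly the same implicit replacement the paper makes when it passes from the main term $\frac{|S_t|}{q^d}\|f_k\|_1^2$ of Theorem \ref{functionalfffalconer} to $q^{-1}C_k^2$ in Lemma \ref{structurefinitefield}, so your argument is on the same footing as the paper's in this respect.
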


\begin{proof}

Let $\displaystyle X = \frac{|E|+2q^\frac{d+1}{2}}{q}$. We know that $C_1 \leq |E|X$. Now we induct on the chain length. Suppose it holds for $C_k$ when $k<n$.
From the previous lemma,

\begin{align*} C_{2k+1} &\leq \frac{C_k^2}{q} + 2q^\frac{d-1}{2}C_{2k}
\\
&\leq \frac{(|E|X^k)^2}{q} + 2q^\frac{d-1}{2} |E|X^{2k} = |E|X^{2k+1}.
\end{align*}

Completing the square in the expression for $C_{2k}$ in the previous lemma gives us 

\begin{align*}
C_{2k} &\leq \frac{C_kC_{k-1}}{q} + 2q^{d-1}C_{2k-2} + 2\sqrt{q^{2d-2}C_{2k-2}^2 + q^{d-2}C_{2k-2}C_kC_{k-1}}
\\
&\leq \frac{|E|^2X^{2k-1}}{q} + 2q^{d-1}|E|X^{2k-2} + 2\sqrt{q^{2d-2}|E|^2X^{4k-4} + q^{d-2}|E|^3X^{4k-3}}
\\
&= \frac{|E|^2X^{2k-1}}{q} + 2q^{d-1}|E|X^{2k-2}\left(1+\sqrt{1 + \frac{|E|X}{q^d}}\right)
\\
&= \frac{|E|^2X^{2k-1}}{q} + 2q^{d-1}|E|X^{2k-2}\left(1+\sqrt{\frac{q^{d+1} + |E|^2 + 2|E|q^\frac{d+1}{2}}{q^{d+1}}}\right)
\\
&= \frac{|E|^2X^{2k-1}}{q} + 2q^{d-1}|E|X^{2k-2}\left(2+\frac{|E|}{q^\frac{d+1}{2}}\right) = |E|X^{2k}.
\end{align*}

\end{proof}

We are now ready to complete the proof of Theorem \ref{mainfinitefield}.

\begin{proof}

By the previous lemma, we have

\begin{align*}C_{n} &\leq |E| \left(\frac{|E|+2q^\frac{d+1}{2}}{q}\right)^n
\\
&= \frac{|E|}{q^n}\sum_{i=0}^n {n \choose i}|E|^{n-i} \left(2q^\frac{d+1}{2} \right)^i
\\
&= \frac{|E|^{n+1}}{q^n} + \frac{2q^\frac{d+1}{2}|E|}{q^n} \sum_{i=1}^{n} {n \choose i}|E|^{n-i} \left(2q^\frac{d+1}{2}\right)^{i-1}
\end{align*}

By assumption, $\displaystyle q^\frac{d+1}{2} \leq \frac{|E|\ln 2}{2n}$, so
\begin{align*}
C_{n} &\leq \frac{|E|^{n+1}}{q^n} + \frac{2q^\frac{d+1}{2}|E|^n}{q^n} \sum_{i=1}^{n} {n \choose i}\left(\frac{\ln2}{n} \right) ^{i-1}
\\
&= \frac{|E|^{n+1}}{q^n} + \frac{2n}{\ln 2}q^\frac{d+1}{2}\frac{|E|^n}{q^n} \left(\left(1 + \frac{\ln 2}{n} \right)^n -1 \right)
\\
&\leq \frac{|E|^{n+1}}{q^n} + \frac{2n}{\ln 2}q^\frac{d+1}{2}\frac{|E|^n}{q^n} (e^{\ln 2} -1) =  \frac{|E|^{n+1}}{q^n} + \frac{2n}{\ln 2}q^\frac{d+1}{2}\frac{|E|^n}{q^n}.
\end{align*}

For the lower bound, we use induction. We know that

$$C_1 \geq \frac{|E|^2}{q} - \frac{2}{\ln 2}q^{\frac{d-1}{2}}|E|.$$

Suppose that for $k < 2n+1$, we have

$$C_k \geq \frac{|E|^{k+1}}{q^k} - \frac{2k}{\ln 2}q^{\frac{d+1}{2}}\frac{|E|^k}{q^k}.$$

We have already shown that 

$$\left|C_{2n+1} - \frac{C_n^2}{q} \right| \leq 2q^\frac{d-1}{2}C_{2n}.$$

This implies

\begin{align*}
C_{2n+1} &\geq\frac{\left(\frac{|E|^{n+1}}{q^n} - \frac{2n}{\ln 2}q^{\frac{d+1}{2}}\frac{|E|^n}{q^n}\right)^2}{q} - 2q^\frac{d-1}{2}\left(\frac{|E|^{2n+1}}{q^{2n}} + \frac{2n}{\ln 2}q^{\frac{d+1}{2}}\frac{|E|^{2n}}{q^{2n}}\right)
\\
&= \frac{|E|^{2n+2}}{q^{2n+1}} -q^\frac{d+1}{2}\frac{|E|^{2n+1}}{q^{2n+1}} \left(\frac{4n}{\ln 2} - q^\frac{d+1}{2}\frac{4n^2}{|E|(\ln 2)^2} + 2 + \frac{4n}{|E|\ln 2}q^{\frac{d+1}{2}} \right).
\end{align*}

Observing that

$$\frac{4n}{|E|\ln 2}q^{\frac{d+1}{2}} - q^\frac{d+1}{2}\frac{4n^2}{|E|(\ln 2)^2} < 0,$$

we have 

$$C_{2n+1} \geq \frac{|E|^{2n+2}}{q^{2n+1}} - \frac{4n+2}{\ln 2}q^\frac{d+1}{2}\frac{|E|^{2n+1}}{q^{2n+1}}.$$

A nearly identical argument gives us the analog for $C_{2n}$.

\end{proof}

\section{Proof of Corollary \ref{longpath}}

In analogy with $f_k$, let $g_k(x)$ be the number of non-overlapping $k$-paths in $E$ beginning at $x$. Then the total number of $k$-paths is

$$G_k = \sum_x g_k(x)$$

Given a non-overlapping $k$-path $(v_0, v_1, v_2, \dots, v_n)$, we must be able to find at least $-n + \sum_y E(y)S(x-y)$ choices of $x$ so that the path $(x, v_0, v_1, \dots, v_n)$ is also non-overlapping. There may be some values of $i$ for which $S(v_i - v_0) = 1$, which is why we must subtract $n$. Otherwise we may be including some overlapping paths in the count.

We then have the following recurrence relation:

$$G_{n+1} \geq \sum_x g_n(x)\left(-n + \sum_y E(y)S(x-y) \right) = -nG_n + \sum_{x,y} g_n(x)E(y)S(x-y)$$

We estimate the sum just as we did in the proof of lemma \ref{structurefinitefield} to get

$$\sum_{x,y} g_n(x)E(y)S(x-y) \geq \frac{|G_n||E|}{q} - 2q^\frac{d-1}{2}|E|^{1/2} \left( \sum_x (g_n (x))^2 \right)^{1/2}$$

We also know that 

$$\sum_x (g_n (x))^2 \leq \sum_x (f_n (x))^2 = C_{2n}.$$

By theorem \ref{mainfinitefield}, we have

$$C_{2n} \leq \frac{|E|^{2n+1}}{q^{2n}} + \frac{4n}{\ln 2}q^\frac{d+1}{2}\frac{|E|^{2n}}{q^{2n}},$$

and by assumption, $|E| \geq \frac{4n}{\ln 2}q^\frac{d+1}{2}$. Thus 

$$C_{2n} \leq 2\frac{|E|^{2n+1}}{q^{2n}}.$$

Moreover

$$G_n \leq C_n \leq \frac{|E|^{n+1}}{q^n} + \frac{2n}{\ln 2} q^\frac{d+1}{2}\frac{|E|^n}{q^n}$$

We will induct on the chain length and assume that

$$G_k \geq \frac{|E|^{k+1}}{q^k} - \frac{4k}{\ln 2}q^\frac{d+1}{2}\frac{|E|^{k}}{q^{k}}.$$

Putting everything together, we get
\begin{align*}
G_{k+1} &\geq \frac{|G_k||E|}{q} - kG_k - 2q^\frac{d-1}{2}\sqrt{2\frac{|E|^{2k+2}}{q^{2k}}}
\\
&\geq \frac{|E|^{k+2}}{q^{k+1}} - \frac{4k}{\ln 2}q^\frac{d+1}{2}\frac{|E|^{k+1}}{q^{k+1}} -
			k\frac{|E|^{k+1}}{q^k} - \frac{4k^2}{\ln 2}q^\frac{d+1}{2}\frac{|E|^{k}}{q^{k}} - 2\sqrt{2}q^\frac{d+1}{2} \frac{|E|^{k+1}}{q^{k+1}}
\\
&= \frac{|E|^{k+2}}{q^{k+1}} - q^\frac{d+1}{2}\frac{|E|^{k+1}}{q^{k+1}}\left(\frac{4k}{\ln 2} + \frac{k}{q^\frac{d-1}{2}} + \frac{4k^2q}{|E|\ln 2} + 2\sqrt{2}\right).
\end{align*}

By assumption,  $|E| \geq \frac{4k}{\ln 2}q^\frac{d+1}{2}$, so

$$G_{k+1} \geq \frac{|E|^{k+2}}{q^{k+1}} - q^\frac{d+1}{2}\frac{|E|^{k+1}}{q^{k+1}}\left(\frac{4k}{\ln 2} + \frac{2k}{q^\frac{d-1}{2}} + 2\sqrt{2}\right).$$

Lastly, since $|E| \leq q^d$, we have $k \leq \frac{\ln 2}{4} q^\frac{d-1}{2}$, giving us

$$G_{k+1} \geq \frac{|E|^{k+2}}{q^{k+1}} - q^\frac{d+1}{2}\frac{|E|^{k+1}}{q^{k+1}}\left(\frac{4k}{\ln 2} +\frac{\ln 2}{2} + 2\sqrt{2}\right) \geq \frac{|E|^{k+2}}{q^{k+1}} - \frac{4k+4}{\ln 2}q^\frac{d+1}{2}\frac{|E|^{k+1}}{q^{k+1}}.$$

\section{Proof of Theorem \ref{kstars}}

Suppose $\vec{t}$ is $k$-dimensional with all nonzero indices. Notice that a $k$-star with edge-lengths given by $\vec{t}$ can also be described by any permutation of the indices of $\vec{t}$. Hence we assume without loss of generality that

$$t_1 = t_2 = \dots = t_{i_1} = 1 ; t_{i_1+1} = \dots = t_{i_2} = 2 ; \dots ; t_{i_{q-2}+1} = \dots = t_{i_{q-1}} = q-1$$

with $i_{q-1} = k$.  Let $h(x) = \#\{ y \in E : \|x-y\| = 1 \}$. (The choice of 1 as the length is arbitrary.) We may equivalently write

$$ h(x) = \sum_y E(y)S(x-y).$$

We begin by estimating $H_n =  \#\{x \in E : h(x) \geq n\}$. That is, $H_n$ is the number of points of $E$ from which an $n$-star with all edge lengths 1 can be made.

We use Cauchy-Schwarz to get

$$\left( \sum_{x, h(x) \geq n} E(x)h(x) \right)^2 \leq \left( \sum_{x, h(x) \geq n} E(x)(h(x))^2 \right) \left( \sum_{x, h(x) \geq n} E(x) \right) \leq H_n\sum_{x} E(x)(h(x))^2.$$

Notice that the sum on the right-hand side is

$$\sum_{x,y,z} E(x)E(y)E(z)S(x-y)S(x-z).$$

By theorem \ref{mainfinitefield}, this is less than $\displaystyle \frac{|E|^3+6q^\frac{d+1}{2}|E|^2}{q^2}$.

For the left-hand side, we notice first that 

$$\sum_{x, h(x) \geq n} E(x)h(x) \geq \sum_{x} E(x)(h(x)-n) = -n|E| + \sum_{x} E(x)h(x).$$

Using theorem \ref{mainfinitefield} again, we see that the left side is bounded below by 

$$\left( \frac{|E|^2-2q^\frac{d+1}{2}|E|}{q} -n|E|\right)^2.$$

Solving for $H_n$ gives

$$H_n \geq |E| - 10q^\frac{d+1}{2} - 2qn.$$

We now return to our vector $\vec{t}$. We now know that there are at least $H_{i_j}$ points in $E$ with $i_j$-stars having all edge lengths $j$. By pigeonholing, there must be at least

$\displaystyle |E| - \sum_{j=1}^{q-1} \left(|E|-H_{i_j} \right)$ points in $E$ from which emanate $k$-stars given by $\vec{t}$.

To get something larger than zero, we just need

$$|E| > \sum_{j=1}^{q-1} \left(|E|-H_{i_j} \right).$$

Note that there are at most $k$ values of $j$ for which $H_{i_j} \neq |E|$. 

Since

$$ \sum_{j=1}^{q-1} |E|-H_{i_j} < \sum_{j=1}^{\min \{k,q-1\}}  \left(10q^\frac{d+1}{2} + 2q(i_j-i_{j-1}) \right),$$

it suffices to have $|E| > \min\{k,q\}10q^\frac{d+1}{2} + 2qk$.  We may also put it thus:

If $|E| > 12q^\frac{d+1}{2}$, then $E$ contains an $\displaystyle \frac{|E|}{12q^\frac{d+1}{2}}$-star of every type.

If $|E| > 12q^\frac{d+3}{2}$, then $E$ contains an $\displaystyle \frac{|E|}{12q}$-star of every type.

\vskip.25in

\end{document}